\newtheorem{theorem}{Theorem}[section]
\newtheorem{corollary}[theorem]{Corollary}
\newtheorem{lemma}[theorem]{Lemma}
\newtheorem{proposition}[theorem]{Proposition}
\newtheorem{remark}[theorem]{Remark}
\theoremstyle{definition}
\newtheorem{definition}[theorem]{Definition}
\numberwithin{figure}{section}
\numberwithin{table}{section}
\newcommand{\cat}{\ensuremath{\mathrm{cat}}}
\newcommand{\id}{\ensuremath{\mathrm{Id}}}
\newcommand{\TC}{\ensuremath{\mathrm{TC}}}
\newcommand{\D}{\ensuremath{\mathrm{D}}}
\newcommand{\pr}{\ensuremath{\mathrm{pr}}}
\begin{document}

\title{Topological Spaces Induced By Homotopic Distance}

\author{Tane Vergili and Ayse Borat}

\date{\today}

\address{\textsc{Tane Vergili}
Karadeniz Technical University\\
Faculty of Science\\
Department of Mathematics\\
Trabzon, Turkey}
\email{tane.vergili@ktu.edu.tr} 

\address{\textsc{Ayse Borat}
Bursa Technical University\\
Faculty of Engineering and Natural Sciences\\
Department of Mathematics\\
Bursa, Turkey}
\email{ayse.borat@btu.edu.tr}

\subjclass[2010]{55M99, 54D99,55M30}

\keywords{Homotopic distance, metric spaces, topological complexity, Lusternik Schnirelmann category}

\begin{abstract} Homotopic distance $\D$ as introduced in \cite{MVML} can be realized as a pseudometric on $\mathrm{Map}(X,Y)$. In this paper, we study the topology induced by the pseudometric $\D$. In particular, we consider the space $\mathrm{Map}(S^1,S^1)$ and show that homotopic distance between any two maps in this space is 1. Moreover, while a general proof of the non-compactness of the space $\mathrm{Map}(X,Y)$ is still an open problem, it can be shown that $\mathrm{Map}(S^1,S^1)$ is not compact. 

\end{abstract}

\maketitle

\section{Introduction}

Homotopic distance which is introduced by Macias-Virgos and Mosquera-Lois, is a generalization of Lusternik Schnirelmann category ($\cat$) and topological complexity ($\TC$). One of the importance of this new concept is to give easier proofs of the $\cat$- and $\TC$-related theorems due to the theorems in \cite{MVML} which tells how the homotopic distance behaves under the composition. This leads us to think that new theorems on $\cat$ and $\TC$ can be proved with a help of homotopic distance. 

In this section, we first give a brief background recalling the definitions of Lusternik Schnirelmann category, topological complexity, and homotopic distance and giving the relations between these concepts. Secondly, we talk about three properties of homotopic distance which allow us to understand the distance as an extended pseudometric. Later we introduce the topology induced by this pseudometric. 

In Section~\ref{Section:circle}, we consider a specific space $\mathrm{Map}(S^1, S^1)$. One of the results of this section is that the homotopic distance between any two maps from $S^1$ to $S^1$ is 1. In Section~\ref{section:last}, we consider the general space $\mathrm{Map}(X,Y)$ and introduce some of its topological properties.

\begin{definition} \cite{CLOT} 
	Lusternik Schnirelmann category of a space $X$, $\cat(X)$, is the least non-negative integer $k\geq 0$ if there exists an open covering $\{U_0, U_1, \ldots, U_k\}$ of $X$ such that the inclusion on each $U_i$ is null-homotopic for $i=0,1,\ldots, k$. 

If there is no such a covering, $\cat(X)=\infty$. 
\end{definition}

\begin{definition}\cite{F2}
    Let $\pi: PX\rightarrow X\times X$, by $\pi(\gamma)=(\gamma(0),\gamma(1))$ be the path fibration. Topological complexity of a space $X$, $\TC(X)$, is the least non-negative integer $k\geq 0$ if there exists an open covering $\{U_0, U_1, \ldots, U_k\}$ of $X\times X$ such that there exists a continuous section $s_i:U_i\rightarrow PX$ for each $i=0,1,\ldots, k$.
    
If there is no such a covering, $\TC(X)=\infty$. 
\end{definition}

\begin{definition}\cite{MVML} Let $f,g:X\rightarrow Y$ be continuous maps, the homotopic distance between $f$ and $g$, denoted by $\D(f,g)$, is the least non-negative integer $k$ such that there exists $U_0, U_1, \cdots, U_k$ open subspaces of $X$ covering $X$ satisfying $f\big|_{U_i}\simeq g\big|_{U_i}$ for all $i=0, 1, \cdots, k$. 

If there is no such a covering, $\D(f,g)=\infty$. 
\end{definition}

The relation between homotopic distance $\D$, $\cat$ and $\TC$ can be given as follows. $\D(\id,c)=\cat(X)$ where $c:X\rightarrow X$ is a constant map, and $\D(i_1,i_2)=\cat(X)$ provided that $i_j: X\hookrightarrow X\times X$ for $j=1,2$, given by $i_1(x)=(x,x_0)$ and $i_2(x)=(x_0,x)$. $\D(\pr_1,\pr_2)=\TC(X)$ provided that $\pr_j: X\times X \rightarrow X$ is the projection to the $j$-th factor for $j=1,2$. For proofs and more details, see \cite{MVML}. 

The properties of the homotopic distance which allow us to build a metric space are listed below. 

\begin{proposition} \label{prop:sym} \cite{MVML} If $f,g:X\rightarrow Y$ are maps, then $\D(f, g)= \D(g, f)$.
\end{proposition}

\begin{proposition}  \cite{MVML} If $f,g:X\rightarrow Y$ are maps, then $\D(f, g)=0$ iff $f \simeq g$.
\end{proposition}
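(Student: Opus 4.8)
The plan is simply to unwind the definition of $\D$ in the extreme case $k=0$. By definition, $\D(f,g)=0$ asserts the existence of an open covering $\{U_0\}$ of $X$ consisting of a \emph{single} open set with $f\big|_{U_0}\simeq g\big|_{U_0}$. A one-element open covering of $X$ must satisfy $U_0=X$, so the condition $\D(f,g)=0$ is literally the statement $f=f\big|_{U_0}\simeq g\big|_{U_0}=g$. This settles the forward implication.

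For the converse, suppose $f\simeq g$. Since $X$ is an open subspace of itself, the singleton $\{X\}$ is an open covering of $X$, and on its only member $f$ and $g$ restrict to $f$ and $g$, which are homotopic by hypothesis. Hence this covering witnesses $\D(f,g)\le 0$, and because $\D$ takes values in the non-negative integers together with $\infty$, we conclude $\D(f,g)=0$.

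I do not expect any genuine obstacle here; the one point that warrants care is purely bookkeeping, namely that the indexing of the covering in the definition of $\D$ begins at $0$, so the equality $\D(f,g)=0$ corresponds to a covering with exactly one member (not to the empty covering), and that $X$ counts as an open subspace of $X$. Combined with Proposition~\ref{prop:sym} and the triangle inequality for $\D$, this proposition is what guarantees that $\D$ descends to a genuine (extended) pseudometric on $\mathrm{Map}(X,Y)$ rather than merely a symmetric function vanishing on the diagonal.
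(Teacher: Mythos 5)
Your proof is correct. The paper offers no argument for this proposition—it simply cites \cite{MVML}—and your unwinding of the definition at $k=0$ (a one-element open cover forces $U_0=X$, so the restricted homotopy is a global homotopy $f\simeq g$, and conversely the cover $\{X\}$ witnesses $\D(f,g)\le 0$) is exactly the standard argument, so there is nothing to compare beyond noting that your bookkeeping about the indexing starting at $0$ is right.
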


\begin{proposition}\cite{MVML} If $f, g, h: X\rightarrow Y$ are maps and $X$ is a normal space, then 
\[
\D(f,h)\leq \D(f,g)+\D(g,h)
\]
\end{proposition}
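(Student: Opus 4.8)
The plan is to imitate the classical proof that Lusternik--Schnirelmann category and topological complexity are subadditive under the appropriate \emph{diagonal} groupings of open covers, the point being that normality of the source is exactly what is needed to glue locally defined homotopies. First I would dispose of the trivial case: if $\D(f,g)=\infty$ or $\D(g,h)=\infty$ the asserted inequality is vacuous, so assume $\D(f,g)=m$ and $\D(g,h)=n$ with $m,n$ finite. Then there are open covers $\{U_0,\dots,U_m\}$ of $X$ with homotopies $H_i\colon f|_{U_i}\simeq g|_{U_i}$, and $\{V_0,\dots,V_n\}$ of $X$ with homotopies $G_j\colon g|_{V_j}\simeq h|_{V_j}$.

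The next step is the elementary ``two--step'' observation: on each intersection $U_i\cap V_j$ the concatenation of $H_i|_{U_i\cap V_j}$ followed by $G_j|_{U_i\cap V_j}$ is a homotopy $f|_{U_i\cap V_j}\simeq h|_{U_i\cap V_j}$. Hence $\{U_i\cap V_j\}_{0\le i\le m,\,0\le j\le n}$ is an open cover of $X$ on each member of which $f$ and $h$ are homotopic. Since this cover has $(m+1)(n+1)$ members, the whole content of the theorem is to reorganise it into only $m+n+1$ open sets retaining this property: for $0\le k\le m+n$ one wants an open set $W_k$ with $\bigcup_{k=0}^{m+n}W_k=X$ and $f|_{W_k}\simeq h|_{W_k}$, the naive candidate being $W_k=\bigcup_{i+j=k}(U_i\cap V_j)$.

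The place where normality is indispensable --- and the step I expect to be the main obstacle --- is that the concatenated homotopies attached to two sets $U_i\cap V_j$ and $U_{i'}\cap V_{j'}$ with $i+j=i'+j'=k$ generally disagree on their overlap, so setting $W_k$ equal to the naive union does \emph{not} produce a well-defined homotopy. To repair this I would invoke the Shrinking Lemma for normal spaces: the finite open cover $\{U_i\cap V_j\}$ admits a shrinking, and, iterating it to gain control of closures, one arranges a new open cover $\{A_{ij}\}$ with $\overline{A_{ij}}\subseteq U_i\cap V_j$ together with a ``peeling'' along a fixed linear order refining the partial order given by $i+j$, so that the pieces sitting on a common antidiagonal $\{i+j=k\}$ get replaced by \emph{pairwise disjoint} open subsets whose union is the sought $W_k$, while the $W_k$ still cover $X$. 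On a disjoint union of open sets continuity of a pieced-together map is automatic, so assigning to each disjoint piece (inside $U_i\cap V_j$) the corresponding concatenated homotopy gives a genuine homotopy $f|_{W_k}\simeq h|_{W_k}$; then $\{W_0,\dots,W_{m+n}\}$ witnesses $\D(f,h)\le m+n=\D(f,g)+\D(g,h)$. Verifying that the disjointness within each antidiagonal survives the shrinking and that coverage is preserved is the technical heart, and it is the exact analogue of the gluing argument used in the product inequality $\cat(X\times Y)\le\cat(X)+\cat(Y)$ and in $\TC(X)\le 2\cat(X)$. (One could instead note that $\D(f,g)$ equals the sectional category of the pullback of the path fibration along $(f,g)$ and quote subadditivity of sectional category under fibrewise concatenation, but the covering argument above is more in keeping with the elementary tone of the paper, and shows precisely why ``$X$ normal'' appears in the hypothesis.)
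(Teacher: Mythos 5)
A preliminary remark: the paper does not prove this proposition at all — it is stated with a citation to \cite{MVML}, so there is no in-paper argument to compare yours with. Your overall plan — concatenate the homotopies $H_i$ and $G_j$ on the intersections $U_i\cap V_j$, regroup these $(m+1)(n+1)$ sets along the antidiagonals $i+j=k$, and use normality to make the pieces lying on a common antidiagonal pairwise disjoint so that the concatenated homotopies glue — is indeed the strategy of the cited source and of the classical proof of $\cat(X\times Y)\leq \cat(X)+\cat(Y)$ for normal spaces, and you correctly isolate both the elementary two-step observation and the precise point where normality enters.

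The difficulty is that the step you yourself call the ``technical heart'' is exactly the content of the proposition, and your sketch of it does not work as stated. The Shrinking Lemma plus a ``peeling'' along a linear order refining $i+j$ faces a dichotomy. If, within an antidiagonal, you delete from each $U_i\cap V_j$ only the earlier \emph{closed} shrinkings, the resulting sets still cover $X$ but need not be pairwise disjoint: a point can lie in two of the original open sets while avoiding every shrunken closed set you removed. If instead you delete the \emph{closures of the earlier open sets}, you do get disjointness, but coverage is no longer automatic, and it cannot in general be restored within the same antidiagonal — two arcs covering $S^1$ cannot be replaced by disjoint open subsets that still cover $S^1$ — so the discarded points must be absorbed by sets belonging to \emph{other} antidiagonals, and organizing exactly that compensation is the delicate part of the known proofs (Schwarz's partition-of-unity argument for the fibrewise join of the pullbacks of the path fibration, or the argument for the product inequality for normal spaces as in \cite{CLOT}). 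Your parenthetical alternative via $\D(f,g)=\secat$ of the pullback of the path fibration is viable, but the subadditivity of $\secat$ under fibrewise concatenation that you would quote is itself proved by this same nontrivial construction, so it does not let you avoid it. As written, the cover $\{W_0,\dots,W_{m+n}\}$ has not actually been produced, so the inequality has not been established; you should either carry out one of these constructions in detail or state and cite the gluing lemma explicitly.
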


Suppose $M$ is a set. A  \emph{pseudometric}  $d$ on $M$ is a function
\begin{displaymath}
d: M\times M \to [0,\infty)
\end{displaymath}
that satisfies
\begin{itemize}
	\item[M1)] $d(m,m)=0$
	\item[M2)] $d(m,n)=d(n,m)$, and
	\item[M3)] $d(m,n)\leq d(m,k)+d(k,n)$
\end{itemize}

for all $m,n,k\in M$.  Further $d$ is called a \emph{semimetric} if it satifies all but $\operatorname{M3}$ with the additional condition that
$d(m,n)=0$ implies $m=n$. An \emph{extended pseudometric} on $M$ is a map $d: M\times M \to [0,\infty]$ satisfying the same three axioms.

Let $\mathrm{Map}(X,Y)$ be the set of continuous maps between $X$ and $Y$
\[ \mathrm{Map}(X,Y) = \{ f:X \to Y  \ | \ f  \ \text{is continuous}  \}. \]
Consider the following function 
\begin{align*}
D: \mathrm{Map}(X,Y)  \times \mathrm{Map}(X,Y)  &\to [0,\infty] \\
(f,g) & \mapsto D(f,g)
\end{align*}
Notice that $D$ is extended semimetric on the quotient $\mathrm{Map}(X,Y) / \mathcal{R}$ where $\mathcal{R}$ is the equivalence relation on $\mathrm{Map}(X,Y)$ defined  by
\begin{displaymath}
f  \ \mathcal{R} \ g \ \ \text{iff} \ \ D(f,g)=0.
\end{displaymath} 
We restrict ourselves for $X$ to be a normal space so that $D$ turns into an extended pseudometric on $\mathrm{Map}(X,Y)$. An (extended) pseudometric also  induces an (extended) pseudometric space which is generated by the set of open balls. Then the topology on $\mathrm{Map}(X,Y)$ induced by $D$ is generated by the open balls
\[B_r(f):=\{ g\in \mathrm{Map}(X,Y) \ | \ D(f,g)<r \}  \]
for $r>0$. Observe that $B_r(f)$ consists of maps which are homotopic to $f$ for $r \leq 1$.

\begin{remark} For a simplicial complex $K$, the geometric realization $||K||$ is a normal Hausdorff space (see Theorem 17 in \cite{S}). So, for simplicial complexes $K$ and $L$, we can consider the topology on $\mathrm{Map}(||K||, ||L||)$ which is induced by $\D$. More generally, for a simplicial complex $K$ and a topological space $X$, we can consider the topology on $\mathrm{Map}(||K||, X)$ induced by $\D$.
\end{remark}

Throughout this paper, the ``domain space'' $X$ of $\mathrm{Map}(X,Y)$ is assumed to be normal space. 

\begin{proposition}\label{indiscreteball}
   Let $f\in \mathrm{Map}(X,Y)$. Then the open ball $B_r(f)$ has the indiscrete topology for $r\leq 1$. 
\end{proposition}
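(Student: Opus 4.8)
The plan is to use the fact that $\D$ takes only values in $\{0,1,2,\dots\}\cup\{\infty\}$, so that the strict inequality $\D(f,g)<r$ with $r\le 1$ forces $\D(f,g)=0$. Hence $B_r(f)$ is exactly the set of maps homotopic to $f$, and the real content is that \emph{any two} elements of $B_r(f)$ are at $\D$-distance zero. I would record this first: if $g,h\in B_r(f)$, then by Proposition~\ref{prop:sym} and the triangle inequality $\D(g,h)\le \D(g,f)+\D(f,h)=0+0=0$, so $\D(g,h)=0$.

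Next I would observe that every open ball in $\mathrm{Map}(X,Y)$ centred at a point of $B_r(f)$ contains all of $B_r(f)$: for any $g\in B_r(f)$, any $\varepsilon>0$, and any $k\in B_r(f)$, we have $\D(g,k)=0<\varepsilon$, so $k\in B_\varepsilon(g)$; thus $B_r(f)\subseteq B_\varepsilon(g)$. Consequently, for every $g\in B_r(f)$ and every $\varepsilon>0$ the basic open set $B_\varepsilon(g)\cap B_r(f)$ of the subspace topology equals $B_r(f)$ itself.

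Finally I would conclude directly. Let $U\subseteq B_r(f)$ be open in the subspace topology and nonempty, and pick $g\in U$. Writing $U=V\cap B_r(f)$ with $V$ open in $\mathrm{Map}(X,Y)$, openness of $V$ gives $\varepsilon>0$ with $B_\varepsilon(g)\subseteq V$, hence $B_\varepsilon(g)\cap B_r(f)\subseteq U$; but the left-hand side is all of $B_r(f)$ by the previous step, so $U=B_r(f)$. Therefore the only open subsets of $B_r(f)$ are $\emptyset$ and $B_r(f)$, i.e.\ $B_r(f)$ carries the indiscrete topology.

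There is no serious obstacle here: the only points requiring minor care are the integrality of $\D$ (which collapses $\D<r$ to $\D=0$ for $r\le 1$) and the bookkeeping of the subspace topology as intersections of ambient basic opens with $B_r(f)$. Both are routine, so the argument is short.
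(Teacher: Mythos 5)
Your argument is correct and is essentially the paper's own proof spelled out in full: the paper's one-line justification is precisely your middle step, that $B_\varepsilon(g)\supseteq B_r(f)$ for every $g\in B_r(f)$ and $\varepsilon>0$, which you derive from integrality of $\D$ and the triangle inequality (valid here since $X$ is assumed normal) and then correctly convert into the statement that the subspace topology is indiscrete.
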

\begin{proof}
	We have $B_\varepsilon(f) \supseteq B_r(f)$ for all $f\in B_r(f)$ and $\varepsilon >0$.
\end{proof}

Proposition~\ref{indiscreteball} yields that the open ball $B_r(f)$ is connected for $r\leq 1$. However this is not true for the case $r> 1$ under a certain condition.

\begin{theorem}\label{connected} $B_r(f)$ is not connected for $r>1$, provided $B_1(f)$ is a proper subset of  $B_r(f)$.

\end{theorem}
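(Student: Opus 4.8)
The plan is to exhibit an explicit disconnection of $B_r(f)$ into two nonempty open pieces whenever $B_1(f) \subsetneq B_r(f)$. The natural candidate is to separate the maps that are homotopic to $f$ from those that are not. Concretely, set $A = B_1(f) = \{ g \in B_r(f) \mid g \simeq f \}$ and $A' = B_r(f) \setminus A = \{ g \in B_r(f) \mid \D(f,g) \geq 1 \}$. Under the hypothesis, $A$ is nonempty (it contains $f$) and $A'$ is nonempty (since $B_1(f)$ is a proper subset of $B_r(f)$). So the whole task reduces to showing that both $A$ and $A'$ are open in the subspace topology on $B_r(f)$.

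First I would handle $A$: it is open because $A = B_1(f)$ is itself a basic open ball, hence open in $\mathrm{Map}(X,Y)$ and therefore open in the subspace $B_r(f)$. Next I would show $A'$ is open by covering each of its points with a small ball contained in $A'$. Fix $g \in A'$, so $\D(f,g) \geq 1$, meaning $g \not\simeq f$. I would consider the ball $B_1(g)$ intersected with $B_r(f)$ and claim $B_1(g) \cap B_r(f) \subseteq A'$: if $h \in B_1(g)$ then $\D(g,h) < 1$, so $\D(g,h) = 0$ by integrality, hence $h \simeq g$; since homotopy is an equivalence relation and $g \not\simeq f$, we get $h \not\simeq f$, i.e. $\D(f,h) \geq 1$, so $h \in A'$ (provided $h \in B_r(f)$, which holds by the intersection). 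This realizes $A'$ as a union of (traces of) open balls, hence open.

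With $A$ and $A'$ both open, nonempty, disjoint, and covering $B_r(f)$, the space $B_r(f)$ is disconnected, which is the claim. The one point that needs a little care — and is the main thing to get right rather than a genuine obstacle — is the use of integrality of $\D$: the values of $\D$ lie in $\{0,1,2,\dots\}\cup\{\infty\}$, so $\D(g,h) < 1$ forces $\D(g,h) = 0$, and this is exactly what collapses the open ball $B_1(g)$ to "the homotopy class of $g$." I would state this integrality observation explicitly (it is already implicit in the remark after the definition of the induced topology, where the authors note $B_r(f)$ consists of maps homotopic to $f$ for $r \leq 1$). Everything else is a routine verification that a partition into the "homotopic-to-$f$" class and its complement is a separation.
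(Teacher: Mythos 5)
Your proposal is correct and follows essentially the same route as the paper: both split $B_r(f)$ into $U=B_1(f)$ and its complement and show this is a separation, the only cosmetic difference being that the paper verifies $U$ is closed via its closure while you verify directly that the complement is open using the integrality of $\D$ (the same fact underlying the paper's closure computation). No gaps; the explicit use of integrality and of $\D(g,h)=0\iff g\simeq h$ is exactly what the paper's argument implicitly relies on.
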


\begin{proof} Take $U=B_1(f)$. For $r>1$, $U=B_1(f)\subseteq B_r(f)$. $U$ is obviously open. Further, $U$ is also closed, since its closure $\overline{U}=U\cup \{h\in B_r(f) \: | \: D(h,U)=0 \}$ is equal to $U$ where $D(h,U)=\min_{g\in U}D(g,h)$. 

Now take $V=U^c$ which is an open set. Since $B_1(f)$ is a proper subset of  $B_r(f)$, $V$ is non-empty. Hence $U$ and $V$ separates $B_r(f)$. This concludes that $B_r(f)$ is not connected.
\end{proof}

Throughout this paper, when we say ``a space'' we mean the topological space induced by the pseudometric $\D$.

\section{$\mathrm{Map}(S^1,S^1)$}\label{Section:circle}

Consider the space $\mathrm{Map}(S^1,S^1)$ where $S^1$ is the unit circle and let $f \in \mathrm{Map}(S^1,S^1)$. Then the induced map $f_*$ from the fundamental group of the circle $\pi_1(S^1)$ to itself is a group homomorphism $f_*: \pi_1(S^1) \to \pi_1(S^1)$. Note that $\mathrm{Im}(f)$ is a subgroup of $\mathbb{Z}$ so that it is of form $n\mathbb{Z}$ for some $n\in \mathbb{Z}$. This gives us that either $f$ is a constant map or is of form $z\mapsto z^n$ for $n\in \mathbb{Z}$.

\begin{theorem} \label{thm:circle} Consider the space $\mathrm{Map}(S^1,S^1)$. Let $f_n, f_m \in \mathrm{Map}(S^1,S^1)$ of degree $n$ and $m$, respectively. Then $\D(f_n,f_m)=1$.
\end{theorem}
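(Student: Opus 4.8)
The plan is to prove the two inequalities $\D(f_n,f_m)\le 1$ and $\D(f_n,f_m)\ge 1$ separately. (Strictly speaking the statement needs $n\ne m$: if $n=m$ then $f_n\simeq f_m$, so $\D(f_n,f_m)=0$; I would insert this as a hypothesis or a parenthetical remark, and treat the case $n\ne m$ below.)

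For the upper bound $\D(f_n,f_m)\le 1$, I would exhibit a two-element open cover of $S^1$ on which $f_n$ and $f_m$ are homotopic. Choose two distinct points $p,q\in S^1$ and set $U_0=S^1\setminus\{p\}$ and $U_1=S^1\setminus\{q\}$. Since $p\ne q$, these two open arcs cover $S^1$, and each $U_i$ is homeomorphic to an open interval, hence contractible. Any continuous map out of a contractible space is null-homotopic, so both $f_n|_{U_i}$ and $f_m|_{U_i}$ are null-homotopic, and therefore $f_n|_{U_i}\simeq f_m|_{U_i}$ for $i=0,1$. By definition of $\D$ this gives $\D(f_n,f_m)\le 1$. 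Note that the intersection $U_0\cap U_1$ plays no role here — only the covering property and the contractibility of the individual pieces matter — and normality of the domain is not needed since we never invoke the triangle inequality (in any case $S^1$ is metrizable, hence a legitimate normal domain).

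For the lower bound $\D(f_n,f_m)\ge 1$, I would invoke the earlier proposition that $\D(f,g)=0$ if and only if $f\simeq g$. Maps $S^1\to S^1$ are classified up to homotopy by their degree — as recalled just before the theorem, every such map is either constant (degree $0$) or homotopic to $z\mapsto z^{k}$ — so $f_n\simeq f_m$ would force $n=m$. Since we are assuming $n\ne m$, we get $f_n\not\simeq f_m$, hence $\D(f_n,f_m)\ne 0$, i.e. $\D(f_n,f_m)\ge 1$. Combining the two bounds yields $\D(f_n,f_m)=1$.

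The argument is essentially immediate once the two ingredients are in place, so there is no serious technical obstacle; the only point requiring care is the bookkeeping around the degenerate case $n=m$ (where the value is $0$, not $1$) and, for the upper bound, making the routine but load-bearing observation that a continuous image of a contractible arc is null-homotopic regardless of the degree of the global map.
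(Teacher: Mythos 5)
Your proof is correct, but the upper bound is obtained by a different (more elementary) route than the paper's. The paper simply cites the general inequality $\D(f,g)\le\cat(X)$ (Corollary 3.9 of the Macias-Virgos--Mosquera-Lois paper) together with $\cat(S^1)=1$, whereas you construct the cover explicitly: two punctured circles $U_0=S^1\setminus\{p\}$, $U_1=S^1\setminus\{q\}$, each contractible, on which both maps become null-homotopic. In effect you are unpacking, in this special case, the proof of the very corollary the paper invokes (a categorical cover of the domain is exactly a cover on which any two maps restrict to null-homotopic ones), so your argument is self-contained where the paper's is shorter by citation; both are legitimate. The lower bound is identical in both: maps $S^1\to S^1$ of different degrees are not homotopic, so $\D(f_n,f_m)\neq 0$. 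Two small points: you are right that the statement implicitly needs $n\neq m$ (the paper's proof also assumes it when it says the degrees ``are not equal''), and your step ``both restrictions are null-homotopic, therefore homotopic to each other'' silently uses that any two constant maps into $S^1$ are homotopic, which holds because $S^1$ is path-connected --- worth one explicit word, but not a gap.
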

\begin{proof}
	We know that $D(f_n,f_m)\leq \mathrm{cat}(S^1)$ by Corollary 3.9 in \cite{MVML}. The fact that $\cat(S^1)=1$ implies $D(f_n,f_m)\leq 1$. Since the degrees of $f_n$ and $f_m$ are not equal, these maps cannot be homotopic. So $D(f_n,f_m)\neq 0$. Hence $D(f_n,f_m)=1$.
\end{proof}

\begin{corollary}
Consider the space $\mathrm{Map}(S^1,S^1)$ and the map $f_n$ as described in Theorem~\ref{thm:circle} where $n \in 
\mathbb{Z}^+$. Then $D(f_n,c)=1$ where $c$ is any constant map $c: S^1 \to S^1$.
\end{corollary}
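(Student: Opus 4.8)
The plan is to derive this directly from Theorem~\ref{thm:circle} by exhibiting a suitable pair of maps. First I would note that a constant map $c:S^1\to S^1$ has degree $0$, since it factors through a point and hence induces the zero homomorphism on $\pi_1(S^1)\cong\mathbb{Z}$. Thus $c$ plays the role of ``$f_0$'' in the notation of Theorem~\ref{thm:circle}. On the other hand, $f_n$ has degree $n$ with $n\in\mathbb{Z}^+$, so in particular $n\neq 0$.

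The key observation is that Theorem~\ref{thm:circle} applies to \emph{any} two maps of given degrees, and here the two degrees are $n$ and $0$, which are distinct because $n\geq 1$. Applying Theorem~\ref{thm:circle} with the pair $(f_n, c)$ — where $f_m$ is taken to be the constant map of degree $m=0$ — immediately yields $\D(f_n,c)=1$. Equivalently, one can repeat the short argument directly: $\D(f_n,c)\leq\cat(S^1)=1$ by Corollary 3.9 in \cite{MVML}, while $\D(f_n,c)\neq 0$ because $f_n\not\simeq c$ (maps of distinct degree are not homotopic), forcing $\D(f_n,c)=1$.

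The only point requiring the slightest care is the claim that a constant map has degree $0$ and therefore is not homotopic to $f_n$; this is standard, following from the fact that homotopic maps $S^1\to S^1$ induce the same homomorphism on $\pi_1$, together with the discussion preceding Theorem~\ref{thm:circle} identifying maps $S^1\to S^1$ up to homotopy by their degree. There is no real obstacle here — the corollary is essentially a specialization of Theorem~\ref{thm:circle} to the case where one of the maps is constant, and the hypothesis $n\in\mathbb{Z}^+$ is exactly what guarantees the two degrees differ.
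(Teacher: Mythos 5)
Your proposal is correct and takes essentially the same route as the paper: the paper also argues that $c$ and $f_n$ are not homotopic (so $\D(f_n,c)\neq 0$) and then invokes the same upper bound $\D(f_n,c)\leq\cat(S^1)=1$ as in Theorem~\ref{thm:circle}. Your extra remark that $c$ is the degree-$0$ case of Theorem~\ref{thm:circle} is a harmless reformulation of the same argument.
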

\begin{proof}
    The constant map $c$ and $f_n$ are not homotopic and the proof follows similarly from Theorem~\ref{thm:circle}.
\end{proof}%

For $f_n \in \mathrm{Map}(S^1,S^1)$ as described in Theorem~\ref{thm:circle}, observe that $B_r(f_n)=\{f_n\}$ for $r\leq 1$ and $B_r(f_n)=\mathrm{Map}(S^1,S^1)$ for $r>1$. Also 
$B_r(c)=\{\text{all \ constant \ maps \ on} \ S^1\}$ for $0<r\leq 1$ and $B_r(c)=\mathrm{Map}(S^1,S^1)$ for $r> 1$.

\begin{corollary}
	The space $\mathrm{Map}(S^1,S^1)$ is second countable (hence separable and Lindelöf).
\end{corollary}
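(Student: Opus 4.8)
The plan is to write down an explicit countable basis for the topology. Recall from the discussion opening this section that every map $S^1 \to S^1$ is either a constant map or, for a unique nonzero integer $n$, the map $f_n\colon z \mapsto z^n$; let $C \subseteq \mathrm{Map}(S^1,S^1)$ denote the set of all constant maps. The first step is to compute the open balls $B_r(f)$ completely. By Theorem~\ref{thm:circle} together with the basic properties of $\D$, the value $\D(f,g)$ equals $0$ when $f$ and $g$ are homotopic and $1$ otherwise; moreover $f_n$ is the only map of degree $n$ when $n \neq 0$, whereas all maps in $C$ are mutually homotopic. Hence, for $0 < r \leq 1$ one finds $B_r(f_n) = \{f_n\}$ and $B_r(c) = C$ for every constant map $c$, while for $r > 1$ every ball is all of $\mathrm{Map}(S^1,S^1)$.

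The second step is then immediate: the family of all open balls --- which is a basis for the topology induced by $\D$ --- consists of just the sets $\{f_n\}$ for $n \in \mathbb{Z}\setminus\{0\}$, the set $C$, and $\mathrm{Map}(S^1,S^1)$ itself. This is a countable collection, so it is a countable basis, and therefore $\mathrm{Map}(S^1,S^1)$ is second countable. The parenthetical assertions follow from the standard implications that a second countable space is both separable and Lindel\"of.

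The one point that needs genuine care --- though it is not hard --- is verifying that the list of open balls above is exhaustive, i.e.\ that for every $f \in \mathrm{Map}(S^1,S^1)$ and every $r > 0$ the ball $B_r(f)$ is one of the listed sets. This comes down entirely to the two facts noted in the first step: $\D$ takes only the values $0$ and $1$ on this space (by Theorem~\ref{thm:circle}), and for $n \neq 0$ the degree-$n$ class is the singleton $\{f_n\}$. Once these are in hand the corollary is a short bookkeeping argument, with no real obstacle.
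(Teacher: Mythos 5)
Your proof is correct and follows essentially the same route as the paper: it exhibits the countable basis consisting of the singletons $\{f_n\}$ and the set of constant maps, using the ball computations $B_r(f_n)=\{f_n\}$ and $B_r(c)=C$ for $0<r\leq 1$ that the paper records just before the corollary. The only cosmetic difference is that you deduce separability and the Lindel\"of property from the standard implications of second countability, whereas the paper cites the pseudometric-space equivalence from Lemma~17 of \cite{BubVer:2018}.
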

\begin{proof}
	A countable basis for $\mathrm{Map}(S^1,S^1)$  is $\mathcal{B}=\{ \{f_n\}  : \ n \in \mathbb{Z} \} \cup \{ B_{\frac{1}{2}}(c)\} \}$. In a (extended) pseudometric space being second countable is equivalent with being separable and Lindelof by Lemma~17 in \cite{BubVer:2018}. 
\end{proof}

\begin{corollary}
	The space $\mathrm{Map}(S^1,S^1)$ is not compact.
\end{corollary}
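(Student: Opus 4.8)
The plan is to produce an open cover of $\mathrm{Map}(S^1,S^1)$ admitting no finite subcover. For each $n\in\mathbb{Z}$ fix a map $f_n\in\mathrm{Map}(S^1,S^1)$ of degree $n$ (e.g.\ $f_n\colon z\mapsto z^n$, with $f_0$ a constant map). By Theorem~\ref{thm:circle}, $D(f_n,f_m)=1$ whenever $n\neq m$, whereas any two maps of the same degree are homotopic, hence at distance $0$. Since every value taken by $D$ on these maps is $0$ or $1$, the ball $B_{1/2}(f_n)=\{g : D(f_n,g)=0\}$ equals the homotopy class of $f_n$; in particular $f_m\in B_{1/2}(f_n)$ if and only if $m=n$.

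First I would verify that $\mathcal{U}=\{B_{1/2}(f_n) : n\in\mathbb{Z}\}$ is an open cover. Each $B_{1/2}(f_n)$ is open by construction of the pseudometric topology, and for an arbitrary continuous $f\colon S^1\to S^1$ we have $f\simeq f_{\deg f}$ (recall from the opening of this section that $f$ is either nullhomotopic or of the form $z\mapsto z^k$), so $D(f,f_{\deg f})=0<\tfrac12$ and $f\in B_{1/2}(f_{\deg f})$.

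Next I would show $\mathcal{U}$ has no finite subcover: if $B_{1/2}(f_{n_1}),\dots,B_{1/2}(f_{n_r})$ covered $\mathrm{Map}(S^1,S^1)$, then picking any $m\notin\{n_1,\dots,n_r\}$ we would need $f_m\in B_{1/2}(f_{n_i})$ for some $i$, forcing $m=n_i$ --- a contradiction. Hence $\mathrm{Map}(S^1,S^1)$ is not compact.

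There is essentially no obstacle here; the statement follows at once from Theorem~\ref{thm:circle}, and the only step requiring a word of justification is that $\mathcal{U}$ actually covers the space, which rests on the classification of self-maps of $S^1$ up to homotopy by degree. A more conceptual variant: the degree map $\deg\colon\mathrm{Map}(S^1,S^1)\to\mathbb{Z}$ is continuous for the discrete topology on $\mathbb{Z}$ (its fibre over $n$ is the open set $B_{1/2}(f_n)$) and is onto, so if the domain were compact then $\mathbb{Z}$ would be a compact discrete space, which is absurd.
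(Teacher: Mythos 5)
Your proof is correct and takes essentially the same route as the paper: both exhibit a countable open cover of $\mathrm{Map}(S^1,S^1)$ by the (open) homotopy classes indexed by degree and observe that it admits no finite subcover. The only cosmetic difference is that the paper writes the cover as the singletons $\{f_n\}$ together with $B_{\frac{1}{2}}(c)$, while you use the balls $B_{1/2}(f_n)$ directly (identifying each with the full homotopy class of degree-$n$ maps), which is if anything the more careful formulation.
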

\begin{proof}
	Let $f_n$ be the maps described in Theorem~\ref{thm:circle} and  $c$ be any constant map. Then the open cover $\mathcal{G}=\{ \{f_n\}  : \ n \in \mathbb{Z} \} \cup  \{ B_{\frac{1}{2}}(c)\} \}$ for $\mathrm{Map}(S^1,S^1)$ does not have a finite subcover. 
\end{proof}

\section{Topological properties of $\mathrm{Map}(X,Y)$}\label{section:last}

\begin{lemma}
	Suppose $X$ is an infinite discrete space. Then $D(\id_X,c)=\infty$ where $1_X$ and $c$ are the identity map and constant map, respectively.
\end{lemma}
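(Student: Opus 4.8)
The plan is to show that $\D(\id_X, c) = \infty$ by arguing that no finite open cover of $X$ can have the property that $\id_X$ and $c$ become homotopic on each piece. The key observation is that if $U \subseteq X$ is an open set on which $\id_X|_U \simeq c|_U$, then the inclusion $U \hookrightarrow X$ must be null-homotopic, i.e.\ $U$ is \emph{categorical} in $X$. So the statement reduces to showing that an infinite discrete space cannot be covered by finitely many categorical open subsets; equivalently, $\cat(X) = \infty$ for $X$ infinite discrete.

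First I would record that $X$ is normal (indeed, any discrete space is normal), so that $\D$ behaves as expected and the earlier results apply. Next I would analyze which subsets $U \subseteq X$ are categorical. In a discrete space every subset is open, and a homotopy $H: U \times I \to X$ with $H_0 = \mathrm{incl}$ and $H_1 = \text{const}$ is a path in $X$ from each point $u \in U$ to the common basepoint; but $X$ is discrete, hence totally path-disconnected, so the only way $u$ can be joined by a path to the basepoint is $u = x_0$. Therefore the only categorical subsets of $X$ are the singletons (and $\emptyset$). Thus a cover witnessing $\D(\id_X, c) \le k$ would consist of at most $k+1$ singletons, which cannot cover the infinite set $X$. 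Hence no such finite cover exists and $\D(\id_X, c) = \infty$.

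The main obstacle — really the only point needing care — is the claim that on a categorical open set $U$ one has $u = x_0$ for all $u \in U$; this rests on the fact that a discrete space has no nonconstant paths, which is immediate since $I$ is connected and its continuous image must be connected, hence a single point. One should also make sure the definition of $\D$ is being applied correctly: $\D(\id_X, c) \le k$ means there exist open $U_0, \dots, U_k$ covering $X$ with $\id_X|_{U_i} \simeq c|_{U_i}$, and the preceding paragraph shows each such $U_i$ is a singleton, so $|X| \le k+1 < \infty$, contradicting infiniteness. This contradiction for every $k$ gives $\D(\id_X, c) = \infty$. (As a remark, this also shows $\cat(X) = \infty$, consistent with the fact that $\cat$ detects, among other things, the number of path components.)
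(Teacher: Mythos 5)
Your proof is correct and follows essentially the same route as the paper: both arguments observe that a set on which $\id_X$ and $c$ are homotopic must lie in a single path component of $X$, that path components of a discrete space are singletons, and that finitely many singletons cannot cover an infinite set. If anything, your explicit use of constant paths (the image of the connected interval $I$ in a discrete space is a point) is a slightly more careful rendering of the paper's terser claim that each $U_i$ is ``contractible in $X$ so that it is path connected.''
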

\begin{proof}
	Any discrete space is normal so that $D$ is a pseudometric on $\mathrm{Map}(X,X)$. Since $X$ is discrete, $\id_X$ and $c$ cannot be homotopic. Hence $D(\id_X,c)>0$. Suppose $D(\id_X,c)=n$ where $n$ is a positive integer. By the definition of the homotopic distance, there exists an open cover $\mathcal{U}=\{U_0,U_1,\dotsc,U_n\}$ for $X$ such that $\id_X \big|_{U_i}\simeq c\big|_{U_i}$ for $i=0,1,\dotsc,n$. $\id_X \big|_{U_i}\simeq c\big|_{U_i}$  implies that $U_i$ is contractible in $X$ so that it is path connected. Since the only path connected subsets of a discrete space are  singletons, $\mathcal{U}$ cannot be a cover for $X$. Thus $D(\id_X,c)=\infty$.
\end{proof}

The space $\mathrm{Map}(X,Y)$ is not interesting whenever $X$ or $Y$ is contractible. 

\begin{theorem}
	If $X$ or $Y$ is contractible, then $\mathrm{Map}(X,Y)$ is indiscrete.  
\end{theorem}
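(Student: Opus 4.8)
The plan is to show that the pseudometric $\D$ is identically zero on $\mathrm{Map}(X,Y)$, which immediately forces the induced topology to be indiscrete: if $\D(f,g)=0$ for all $f,g$, then every nonempty open ball $B_r(f)$ with $r>0$ equals the whole space, so by Proposition~\ref{indiscreteball} (or directly) the only open sets are $\emptyset$ and $\mathrm{Map}(X,Y)$. So the real content is the claim that any two continuous maps $f,g\colon X\to Y$ are homotopic when $X$ or $Y$ is contractible, i.e.\ $\D(f,g)=0$, which one can even witness with the single open set $U_0 = X$.

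The argument splits into two cases. First suppose $Y$ is contractible, so there is a homotopy $H\colon Y\times I\to Y$ with $H_0=\id_Y$ and $H_1$ equal to a constant map $c_{y_0}$. Then for any $f\colon X\to Y$ the composite $(x,t)\mapsto H(f(x),t)$ is a homotopy from $f$ to the constant map at $y_0$; hence every map into $Y$ is null-homotopic, and so any two maps $f,g\colon X\to Y$ are homotopic to each other (both being homotopic to the constant at $y_0$). Thus $\D(f,g)=0$. Second, suppose $X$ is contractible, so there is $G\colon X\times I\to X$ with $G_0=\id_X$ and $G_1=c_{x_0}$. For $f\colon X\to Y$, the composite $(x,t)\mapsto f(G(x,t))$ is a homotopy from $f$ to the constant map at $f(x_0)$; so again every map out of $X$ is null-homotopic. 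One small wrinkle: in this case the constant values $f(x_0)$ and $g(x_0)$ may differ, so I would additionally note that any two constant maps $X\to Y$ are homotopic precisely when their image points lie in the same path component of $Y$ --- which holds here because $X$ is path-connected (being contractible) and $f,g$ are continuous, so $f(x_0)$ and $g(x_0)$ are both in $f(X)\cup g(X)$... actually more simply: pick any point $x_1\in X$; then $x\mapsto f(G(x,t))$ runs from $f$ to $c_{f(x_0)}$, and likewise $g\simeq c_{g(x_0)}$, while the path $t\mapsto f(G(x_0', t))$ type reasoning shows the two constants are joined. I would phrase this cleanly by just choosing a path in $X$ (contractible spaces are path-connected) from a point to itself is trivial; rather, use that $f$ and $g$ restricted along the contraction both land at constants and path-connectedness of $Y$'s relevant component follows since $X$ connected maps to a connected subset.

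I expect the only genuine subtlety --- and the step I would be most careful about --- is this last point in the $X$-contractible case: concluding $c_{f(x_0)}\simeq c_{g(x_0)}$. The clean fix is to observe that $f$ and $g$ agree up to homotopy with constants whose values both lie in the path-connected set $f(X)$ is not quite it; instead I would argue: since $X$ is contractible it is path-connected, so $f(X)$ is path-connected, and more to the point the homotopy $(x,t)\mapsto f(G(x,t))$ already shows $f\simeq c_{f(x_0)}$ through maps with image in $f(X)$; symmetrically $g\simeq c_{g(x_0)}$. Now $f(x_0)$ and $g(x_0)$ both lie in $Y$, and a homotopy between the two constant maps is exactly a path in $Y$ between these points --- which need not exist in general, but here I can instead avoid constants altogether: by Theorem~\ref{thm:circle}-style reasoning $\D(f,g)\le\cat(X)=0$ since a contractible space has $\cat(X)=0$, and $\D(f,g)\le\cat(X)$ always holds (Corollary 3.9 in \cite{MVML}, already invoked in the proof of Theorem~\ref{thm:circle}). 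That is in fact the slickest route for the $X$-contractible half. So overall: handle $Y$ contractible by direct null-homotopy of every map into $Y$; handle $X$ contractible via $\D(f,g)\le\cat(X)=0$; then invoke Proposition~\ref{indiscreteball} to conclude the topology is indiscrete.
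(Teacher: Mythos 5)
Your first half ($Y$ contractible) is correct and is exactly the content of the paper's one-line proof: every map into $Y$ is null-homotopic to the constant map at the contraction point $y_0$, so any two maps $f,g\colon X\to Y$ are homotopic, $\D\equiv 0$, and hence every ball $B_r(f)$ with $r>0$ is the whole space, making the topology indiscrete. In the second half you have in fact put your finger on a genuine subtlety that the paper's proof glosses over: when only $X$ is contractible, the contraction gives $f\simeq c_{f(x_0)}$ and $g\simeq c_{g(x_0)}$, and a homotopy between these two constant maps is precisely a path in $Y$ from $f(x_0)$ to $g(x_0)$, which need not exist if $Y$ is not path-connected.

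However, your proposed repair does not close that gap; it only relocates it. The inequality $\D(f,g)\le\cat(X)$ cited from \cite{MVML} is proved there under hypotheses ensuring path-connectedness, and it cannot hold unconditionally: take $X$ a one-point space and $Y$ a two-point discrete space. Then $X$ is normal and contractible with $\cat(X)=0$, but the two constant maps $f,g\in\mathrm{Map}(X,Y)$ satisfy $\D(f,g)=\infty$, since the only nonempty open subset of $X$ is $X$ itself and $f\not\simeq g$ there (a homotopy would be a path in $Y$ between the two points). In this example $\mathrm{Map}(X,Y)$ is a two-point discrete space, so the statement itself --- and the paper's own appeal to ``any two maps from a contractible space are homotopic'' --- fails in the $X$-contractible case without assuming $Y$ path-connected, whereas your direct argument in the $Y$-contractible case needs no extra hypothesis. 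To make your proof complete, either add the hypothesis that $Y$ is path-connected when only $X$ is assumed contractible (then your original direct argument finishes: $f\simeq c_{f(x_0)}\simeq c_{g(x_0)}\simeq g$), or verify that the hypotheses of the cited Corollary 3.9 are actually satisfied, which in the stated generality they are not.
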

\begin{proof}
	The proof follows from the fact that any two continuous maps from a contractible space to any space or vice versa are homotopic.
\end{proof}

\begin{proposition}
	If $X$ is an infinite discrete space, then $\mathrm{Map}(X,X)$ is not path connected. 
\end{proposition}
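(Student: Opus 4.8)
The plan is to show that the identity map $\id_X$ and a constant map $c\colon X\to X$ lie in different path components of $\mathrm{Map}(X,X)$, which by definition forces $\mathrm{Map}(X,X)$ to be non-path-connected. By the preceding lemma we already know $D(\id_X,c)=\infty$, so $\id_X$ and $c$ are not even in the same (quasi)component; but path-connectedness is a priori stronger to rule out, so the argument must produce an obstruction to connecting them by a \emph{path} in the pseudometric space.

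The key observation is that in an extended pseudometric space, a continuous path $\gamma\colon[0,1]\to\mathrm{Map}(X,X)$ has image of finite diameter (the continuous image of a compact connected set is contained in a ball of finite radius: cover $[0,1]$ by finitely many intervals on which $\gamma$ varies by less than $1$, and chain the triangle inequality). Hence if $\gamma(0)=\id_X$ and $\gamma(1)=c$, we would get $D(\id_X,c)<\infty$, contradicting the lemma. So the steps are: first, recall from the lemma that $X$ infinite discrete implies $D(\id_X,c)=\infty$; second, prove the general fact that path-connected subsets of an (extended) pseudometric space have finite diameter; third, conclude that $\id_X$ and $c$ cannot be joined by a path, so $\mathrm{Map}(X,X)$ is not path connected.

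The main obstacle, and the only slightly delicate point, is the finite-diameter lemma for extended pseudometric spaces: one must be careful that ``distance $\infty$'' is genuinely excluded by a compactness/chaining argument rather than silently assuming the pseudometric is finite-valued. The chaining argument is routine — by continuity of $\gamma$ at each $t$, choose $\delta_t$ with $D(\gamma(t),\gamma(s))<1$ for $|s-t|<\delta_t$, extract a finite subcover of $[0,1]$, pick finitely many sample points $0=t_0<t_1<\dots<t_N=1$ each within some $\delta_{t_i}$ of the next, and apply $\operatorname{M3}$ finitely many times to bound $D(\gamma(0),\gamma(1))$ by roughly $2N$ — but it does need to be written out since the triangle inequality for $\D$ is only available on normal domains, which is fine here since discrete spaces are normal.

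Once these pieces are in place the proof is essentially one line: if $\mathrm{Map}(X,X)$ were path connected there would be a path from $\id_X$ to $c$, forcing $D(\id_X,c)<\infty$ and contradicting the lemma.
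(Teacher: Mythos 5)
Your proof is correct, but it takes a genuinely different route from the paper: the paper disposes of this proposition with a one-line citation to Lemma~13 of \cite{BubVer:2018}, which packages precisely the general principle you prove by hand, namely that two points of an extended pseudometric space joined by a path must be at finite distance. Your version combines the paper's preceding lemma ($D(\id_X,c)=\infty$ for $X$ infinite discrete) with a chaining argument over the compact interval $[0,1]$, and you rightly note that the triangle inequality for $\D$ is available because a discrete space is normal; this gives a self-contained proof, whereas the paper's citation buys brevity at the cost of sending the reader to an external lemma stated in the persistence-module setting. Two small remarks. First, your intermediate claim that ``path-connected subsets of an extended pseudometric space have finite diameter'' is not literally true (the real line is path connected with infinite diameter); what your chaining actually establishes, and all you need, is that the image of a single path $\gamma\colon[0,1]\to \mathrm{Map}(X,X)$, being compact and connected, has all its points at finite distance from one another, so $D(\gamma(0),\gamma(1))<\infty$. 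Second, you could avoid the compactness chaining altogether: the finite-distance equivalence classes (``galaxies'') are unions of open unit balls, hence open, hence clopen, so any connected --- in particular any path-connected --- subset lies in a single class; since $\id_X$ and $c$ lie in different classes, no path can join them. Either way the conclusion stands.
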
	
\begin{proof}
	See Lemma 13 in \cite{BubVer:2018}.
\end{proof}

\begin{theorem}  $\mathrm{Map}(X,Y)$ is not connected, provided that $\mathrm{Map}(X,Y)$ is not indiscrete. 
\end{theorem}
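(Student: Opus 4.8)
The plan is to follow the template of Theorem~\ref{connected}: I will show that every homotopy class of maps is a clopen subset of $\mathrm{Map}(X,Y)$, and that the hypothesis forces $\mathrm{Map}(X,Y)$ to contain at least two such classes, so that one class together with its complement disconnects the space. The starting observation is that, since $\D$ takes values in $\{0,1,2,\dots\}\cup\{\infty\}$, for every $f$ the open ball $B_1(f)$ is exactly the homotopy class $[f]=\{g\in\mathrm{Map}(X,Y) : g\simeq f\}$, because $\D(f,g)<1$ is equivalent to $\D(f,g)=0$, i.e.\ to $g\simeq f$.

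First I would unwind the hypothesis. If $\mathrm{Map}(X,Y)$ is not indiscrete it has a nonempty proper open set, and since the topology is generated by the balls $B_r(f)$ this yields a nonempty proper ball $B_r(h)$. Choosing any $g$ outside it gives $\D(h,g)\geq r>0$, hence $h\not\simeq g$, so $\mathrm{Map}(X,Y)$ has at least two distinct homotopy classes. Now fix $f$ with $U:=B_1(f)=[f]\neq\mathrm{Map}(X,Y)$. Then $U$ is open, being an open ball. It is also closed: in a pseudometric space $\overline{U}=\{h : \D(h,U)=0\}$ with $\D(h,U)=\min_{g\in U}\D(g,h)$, and $\D(h,U)=0$ means $\D(g,h)=0$, i.e.\ $g\simeq h$, for some $g\in U=[f]$; by transitivity of homotopy $h\simeq f$, so $h\in U$. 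Hence $\overline{U}=U$. Finally $V:=U^{c}$ is open, and nonempty because $[f]$ is a proper subset; thus $U$ and $V$ separate $\mathrm{Map}(X,Y)$, which is therefore not connected.

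The only step carrying any content is the reduction ``not indiscrete $\Rightarrow$ at least two homotopy classes'', and even that is just the remark that the topology is generated by the balls $B_r(f)$ together with the fact that balls of radius at most $1$ are homotopy classes. Everything else is the clopen-ness of $B_1(f)$, which is essentially already contained in Theorem~\ref{connected}, so I do not anticipate a genuine obstacle; the statement is really a repackaging of that theorem together with an analysis of when $B_1(f)$ is a proper subset.
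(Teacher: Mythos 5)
Your proposal is correct and follows essentially the same route as the paper: take $U=B_1(f)$, show it is clopen exactly as in Theorem~\ref{connected}, and use the non-indiscreteness hypothesis to get a nonempty complement. The only (harmless) difference is that you unwind the hypothesis more explicitly---extracting a proper ball from a proper open set to locate a suitable $f$---whereas the paper fixes an arbitrary $f$ and notes that $B_1(f)=\mathrm{Map}(X,Y)$ would force the whole space to be indiscrete via Proposition~\ref{indiscreteball}.
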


\begin{proof} We can choose $U=B_1(f)$ for a fixed $f\in \mathrm{Map}(X,Y)$. Notice that $U$ is non-empty, open and closed (see proof of Theorem~\ref{connected}). 

Define a set $V=\mathrm{Map}(X,Y)\setminus B_1(f)$. More precisely, there is a $g\in \mathrm{Map}(X,Y)$ which is not in $B_1(f)$. If we cannot find such a $g$, then we have $\mathrm{Map}(X,Y)=B_1(f)$ which contradicts with the fact that $\mathrm{Map}(X,Y)$ is not indiscrete. Hence $V$ is non-empty and $\mathrm{Map}(X,Y)$ is not connected.

\end{proof}

\end{document}